\newcommand{\R}{\mathbb{R}}
\newcommand{\C}{\mathbb{C}}
\renewcommand{\phi}{\varphi}
\newcommand{\dd}{\mathrm{d}}
\renewcommand{\i}{\mathrm{i}}
\renewcommand{\Im}{\mathrm{Im}\,}
\newtheorem{defi}{Definition}
\newtheorem{lemma}[defi]{Lemma}
\begin{document}

\title{On  reconstruction of small  sources from Cauchy data at a fixed frequency}
\author{Isaac Harris\thanks{Department of Mathematics, Purdue University, West Lafayette, IN 47907, USA; (\texttt{harri814@purdue.edu})} \and Thu Le\thanks{Department of Mathematics, Kansas State University, Manhattan, KS 66506, USA; (\texttt{thule@ksu.edu})} \and Dinh-Liem Nguyen\thanks{
Department of Mathematics, Kansas State University, Manhattan, KS 66506, USA; (\texttt{dlnguyen@ksu.edu}).
 }}

\date{}
\maketitle

\begin{abstract}
This short paper is concerned with the numerical reconstruction of  small  sources from boundary Cauchy data for a single  frequency. 
We study a sampling method to determine the location of small sources  in a very fast and robust way. Furthermore, the method can also 
  compute the intensity of point sources provided that the sources  are well separated. A simple justification of the method is done using  the Green representation formula 
and an asymptotic expansion of the radiated field for small volume sources. The implementation of the method  is non-iterative, 
computationally cheap, fast, and very simple. Numerical examples are presented to illustrate the performance of the method.

\end{abstract}

\sloppy

{\bf Keywords. }
inverse source  problem, numerical reconstruction, Cauchy data, small sources, single frequency

\bigskip

{\bf AMS subject classification. }
 35R30,  35R09, 65R20

\section{Introduction}
\label{intro}

We study in this paper a numerical method for the inverse source problem of determining small radiating sources from Cauchy data  at a fixed frequency. 
The Cauchy data are  boundary measurements of the field generated by the source.  Inverse source problems  have
 applications in  physics and engineering such as antenna synthesis and medical imaging. These inverse problems  
 have been studied intensively during the past two decades. We refer to \cite{Acosta2012, ElBadia2013, Cheng2016, Bao2020, Li2021} and references therein for  results on stability analysis of inverse source problems.  
There has been a large amount of literature on numerical methods for solving these inverse  problems. 
Results on numerical methods  for inverse source problems with multi-frequency data can be found in~\cite{Eller2009, Bao2011, Bao2015, Zhang2015,  Griesmaier2017, LHNguyen2019, Alzaalig2020, Ji2021}.

In the case of data associated with a single frequency  inverse source problems may have multiple solutions  unless some a priori information about the source is given~\cite{Alves2009, ElBadia2011}. We refer to~\cite{ElBadia2000, ElBadia2005, Nara2008, Chung2009} for an algebraic method for computing point  sources  for  elliptic partial differential equations such as the Poisson equation and the Helmholtz equation.  An iterative approach of Newton type can be found in~\cite{Alves2009, Kress2013} for recovering  point  and extended sources.
 A direct sampling method for computing multipolar point sources for the Helmholtz equation   was studied in \cite{Zhang2019}. The main advantage of the sampling approach is that it  is  non-iterative,  fast, and computationally cheap. In the present work, we  also  study a sampling method  for recovering  point sources and small volume sources for the Helmholtz equation. Compared with the imaging functional  introduced in~\cite{Zhang2019} the  imaging functional  in this paper involves only a single integral instead of 
 a double integral. Thus the sampling  method  in the present work may be  faster and computationally cheaper (e.g. for  large search domains in three dimensions).  
 In addition, our implementation  does not require a multi-level sampling process as in~\cite{Zhang2019} and seems to provide equivalently good reconstruction results. 
 The justification of our sampling method for both two and three dimensions is done    simultaneously  in a pretty  simple way. 
It relies on   the Green representation formula and an asymptotic expansion of the radiated field for small volume sources.

The paper is organized as follows. Section 2 is dedicated to the reconstruction of the location and intensity of  point sources. The method is extended to locate sources with small volumes in Section 3. A numerical study of the performance of  the method is presented in Section 4. 

\section{Reconstruction of point sources}
We consider $N\in \mathbb{N}$  point sources located at $x_j$ for $j = 1,  2, \dots, N$ such that $\text{dist}(x_i , x_j) \geq c_0>0$. These sources are represented by the Delta distributions $\delta_{x_j}$ and have nonzero  intensities $\alpha_j \in \C$. Let $k>0$ be the wavenumber. Assume that the sources generate the radiating scattered field  $u$ that satisfies the following model problem
\begin{align}
\label{Helm}
& \Delta u + k^{2} u = - \sum_{j = 1}^N \alpha_j \delta_{x_j},\quad  \text{in } \mathbb{R%
}^{d},  \\
\label{radiation}
& \lim_{r\rightarrow \infty }r^{\frac{d-1}{2}}\left( \frac{\partial u}{\partial r} - \i ku\right) =0,\quad r=|x|,
\end{align}
where  $d$ = 2 or 3 is the dimension and the Sommerfeld radiation condition~\eqref{radiation} holds
uniformly for all  directions $ \hat{x} \in  \{\hat{x}\in \R^d: |\hat{x}| = 1\}$. The radiating scattered field  $u$ is given by 
\begin{align}
\label{usc}
u(x) = \sum_{j=1}^N \alpha_j  \,\Phi(x,x_j),
\end{align}
where  $\Phi(x,y)$ is the Green function of problem~\eqref{Helm}--\eqref{radiation} 
\begin{equation} 
\label{green}
 \Phi(x,y)= 
  \left\{ 
\begin{array}{cl}
\frac{\text{i}}{4}H^{(1)}_0(k|x-y|), & \text{in } \R^2 , \\ 
\frac{e^{\text{i}k|x-y|}}{4\pi|x-y|}, & \text{in } \R^3,
	\end{array}
	\right.
\end{equation}
and  $H_{0}^{(1)}$ is the first kind Hankel function of order zero. 
Let  $\Omega$ be a regular bounded   domain with boundary $\partial \Omega$ in $\R^d$. Assume that $x_j \in \Omega$ for $j = 1, \dots, N$ 
and denote by $\nu(x)$ the outward normal unit vector to $\partial \Omega$ at $x$.

\vspace{0.1cm}
\noindent
\textbf{Inverse problem:} Given the Cauchy data $u$ and $ \partial u/ \partial \nu$ on $\partial \Omega$ for a fixed wave number $k$, determine  locations $x_j$ and intensities $\alpha_j$ for $j = 1, 2, \dots, N$. 

We note that with a single wave number $k$ the Cauchy data is important to justify the uniqueness of solution to this inverse problem, see~\cite{Alves2009, ElBadia2011}.
The following simple lemma is important for the resolution analysis of the imaging functional we study for the inverse source problem. 
\begin{lemma}
The following identity holds for the radiating scattered field  $u$ in~\eqref{usc}
\begin{align}
\label{identity}
 \int_{\partial \Omega} \left(\frac{\partial \Im \Phi(x,z)}{\partial \nu(x)} u(x) - \Im \Phi(x,z) \frac{\partial u(x)}{\partial \nu(x)} \right) \dd s(x)= \sum_{j=1}^N \alpha_j  \,\Im \Phi(x_j,z).
\end{align} 
\end{lemma}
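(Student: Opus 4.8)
The plan is to use Green's second identity on the domain $\Omega$ for the pair of functions $u$ and $\Im \Phi(\cdot, z)$, and then exploit the fact that $\Im \Phi(\cdot, z)$ is an entire solution of the homogeneous Helmholtz equation while $u$ solves the inhomogeneous equation with point-source data. First I would recall that, from~\eqref{green}, both $\Phi(x,z)$ and $\ol{\Phi(x,z)}$ satisfy $\Delta_x w + k^2 w = -\delta_z$ away from... more precisely, $\ol{\Phi(x,z)}$ satisfies the equation with the \emph{outgoing} condition replaced by an incoming one, so the combination $\Im \Phi(x,z) = \tfrac{1}{2\i}\big(\Phi(x,z) - \ol{\Phi(x,z)}\big)$ has the two Dirac masses cancel, leaving
\[
\Delta_x \Im \Phi(x,z) + k^2 \Im \Phi(x,z) = 0 \qquad \text{in all of } \R^d.
\]
Thus $\Im \Phi(\cdot,z)$ is smooth everywhere, in particular near each $x_j$ and on $\partial\Omega$, which is what makes the boundary integral in~\eqref{identity} well defined.

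Next I would apply Green's second identity to $u$ and $v:=\Im\Phi(\cdot,z)$ on $\Omega$. Since $u$ is not smooth at the points $x_j$, the clean way is to excise small balls $B(x_j,\rho)$, apply Green's identity on $\Omega \setminus \bigcup_j \ol{B(x_j,\rho)}$ where both functions are smooth, and let $\rho \to 0$. On the excised region,
\[
\int_{\Omega\setminus\cup_j B(x_j,\rho)} \big(v\,\Delta u - u\,\Delta v\big)\,\dd x
= \int_{\partial\Omega}\Big(v\frac{\partial u}{\partial\nu} - u\frac{\partial v}{\partial\nu}\Big)\dd s
- \sum_{j=1}^N \int_{\partial B(x_j,\rho)}\Big(v\frac{\partial u}{\partial\nu} - u\frac{\partial v}{\partial\nu}\Big)\dd s,
\]
with $\nu$ on the small spheres pointing into the balls. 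The volume integrand vanishes because $\Delta u = -k^2 u$ there (no sources inside the excised region) and $\Delta v = -k^2 v$, so $v\Delta u - u\Delta v = -k^2 uv + k^2 uv = 0$. It remains to evaluate the limits of the small-sphere integrals.

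The main technical point — and the step I expect to require the most care — is the $\rho\to 0$ analysis of $\int_{\partial B(x_j,\rho)}\big(v\,\partial_\nu u - u\,\partial_\nu v\big)\dd s$. Here $u = \alpha_j \Phi(\cdot,x_j) + (\text{terms smooth near } x_j)$, and $v$ is smooth near $x_j$. The smooth remainder of $u$ contributes $O(\rho^{d-1})\to 0$. For the singular part, the term $\int_{\partial B(x_j,\rho)} \alpha_j \Phi(x,x_j)\,\partial_\nu v\,\dd s$ is $O(\rho^{d-1}\log\rho)$ in $d=2$ and $O(\rho)$ in $d=3$, hence $\to 0$; while $\int_{\partial B(x_j,\rho)} v(x)\,\alpha_j\,\partial_{\nu(x)}\Phi(x,x_j)\,\dd s(x) \to \alpha_j\, v(x_j)$ by the standard jump computation for the fundamental solution (the normal derivative of $\Phi$ integrates to $1$ over the sphere in the limit, with the sign fixed by the inward-pointing $\nu$). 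Collecting these limits gives
\[
0 = \int_{\partial\Omega}\Big(v\frac{\partial u}{\partial\nu} - u\frac{\partial v}{\partial\nu}\Big)\dd s + \sum_{j=1}^N \alpha_j\, v(x_j),
\]
which, after writing $v = \Im\Phi(\cdot,z)$ and rearranging, is exactly~\eqref{identity}. The only thing to double-check is the orientation bookkeeping for $\nu$ on $\partial B(x_j,\rho)$ so that the sign in front of $\sum_j \alpha_j \Im\Phi(x_j,z)$ comes out positive as stated.
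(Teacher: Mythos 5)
Your proof is correct and follows essentially the same route as the paper: both rest on the observation that $\Im\Phi(\cdot,z)$ solves the homogeneous Helmholtz equation in all of $\R^d$ combined with Green's second identity. The paper substitutes $u=\sum_j\alpha_j\Phi(\cdot,x_j)$ and then simply cites the resulting Green representation formula $\int_{\partial\Omega}\big(\partial_{\nu}\Im\Phi(x,z)\,\Phi(x_j,x)-\Im\Phi(x,z)\,\partial_{\nu}\Phi(x_j,x)\big)\,\dd s(x)=\Im\Phi(x_j,z)$ term by term, whereas you re-derive that formula from scratch via the excision argument; your limit computation and final signs come out as stated.
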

\begin{proof}
Substituting the formula for $u$ in \eqref{usc} in the left hand side of~\eqref{identity} implies
\begin{align}
& \int_{\partial \Omega} \left(\frac{\partial \Im \Phi(x,z)}{\partial \nu(x)}  \sum_{j=1}^N \alpha_j  \,\Phi(x,x_j) - \Im \Phi(x,z)  \sum_{j=1}^N \alpha_j  \frac{\partial \Phi(x,x_j)}{\partial \nu(x)} \right) \dd s(x) \nonumber \\
\label{int1}
 &=  \sum_{j=1}^N \alpha_j  \int_{\partial \Omega}  \left(\frac{\partial \Im \Phi(x,z)}{\partial \nu(x)} \Phi(x_j,x) - \Im \Phi(x,z) \frac{\partial \Phi(x_j,x)}{\partial \nu(x)} \right)\dd s(x). 
\end{align}
Due to the fact that $\Delta \Im \Phi(z,y) + k^2 \Im \Phi(z,y) = 0$ for  all $z, y \in \mathbb{R}^d$ the proof follows from the Green representation formula i.e. 
$$ \int_{\partial \Omega}  \left(\frac{\partial \Im \Phi(x,z)}{\partial \nu(x)} \Phi(y,x) - \Im \Phi(x,z) \frac{\partial \Phi(y,x)}{\partial \nu(x)} \right)\dd s(x) = \Im \Phi(y,z)$$
which proves the claim.
\end{proof}
Now, we define the imaging functional 
\begin{align}
\label{functional}
I(z):= \int_{\partial \Omega} \left(\frac{\partial \Im \Phi(x,z)}{\partial \nu(x)} u(x) - \Im \Phi(x,z) \frac{\partial u(x)}{\partial \nu(x)} \right) \dd s(x), 
\quad z \in \R^d.
\end{align}
Therefore, from the lemma above we have that 
$$ 
I(z) =  \sum_{j=1}^N \alpha_j  \,\Im \Phi(x_j,z), 
$$
where
 \begin{equation}
 \label{imag-bessel}
\Im \Phi(x,z) = \left\{ 
		\begin{array}{cl}
			\frac{1}{4} J_{0}(k|x - z|) &\text{in } \R^2, \\
			 \frac{k}{4\pi} j_0(k | x - z|)  &\text{in } \R^3,
		\end{array}
	\right.
\end{equation} 
and $J_0$ and $j_0$ are respectively a Bessel function and a spherical Bessel function of the first kind.

We know from~\eqref{imag-bessel} that $\Im \Phi(z,x_j)$ peaks at $z \approx x_j$ and decays as $z$ is away from $x_j$. Thus we can
determine $N$ as the number of significant peaks of  $|I(z)|^p$  where $p > 0$ is chosen to sharpen the significant peaks of the imaging functional (e.g. $p = 4$ works well
for the numerical examples in the last section). Then $x_j$  can be determined as the locations where $|I(z)|^p$ has significant peaks.   It is obvious that the resolution for imaging the sources is within the diffraction limit. 
Furthermore, from the fact $$J_0 (t)= \mathcal{O}(t^{-1/2}) \quad \text{ and } \quad j_0 (t)= \mathcal{O}(t^{-1}) $$ 
as $t \to \infty$,
we have
 $$|I(z)|^p = \mathcal{O} \Big ( \text{dist}(z,{ X})^{\frac{(1-d)p}{2}} \Big ) \quad \text{as} \quad \text{dist}(z,{ X}) \rightarrow \infty$$ 
where the set ${ X} = \{x_j : 1, \hdots, N \}$. Moreover, once $x_j$ is obtained the  intensity $\alpha_j$ can be estimated as
\begin{align}
\label{alpha}
\alpha_j \approx \frac{I (x_j)}{\Im \Phi(x_j,x_j)} \quad \text{since the $x_j$'s are well separated.} 
\end{align}
It is easy to verify that the imaging functional is stable with respect to noise in the Cauchy data see for instance \cite{Zhang2019}. We leave the proof to the readers. 

\section{Reconstruction of small volume sources}
We let $D$ with Lipschitz boundary $\partial D$  have small volume in $\R^d$ such that $|D| = \mathcal{O}(\epsilon^d)$. To this end, we assume that the scatterer $D$ is given by a collection of small volume subregions i.e.
\begin{equation}\label{sball}
D = \bigcup_{j=1}^{N} D_j \quad \text{with} \quad D_j = (x_j + \epsilon B_j) \quad \text{such that} \quad \text{dist}(x_i , x_j) \geq c_0>0
\end{equation}
for $ i \neq j$ where the parameter $0< \epsilon \ll 1$ and $B_j$ is a radially symmetric domain with Lipschitz boundary centered at the origin such that $|B_j|$ is independent of $\epsilon$. We also assume that the  $D_j$ are pairwise disjoint such that $D_j \cap D_i$ is empty for $ i \neq j$. Let $f \in L^2(D)$ where $\mathbbm{1}_{D}$ denotes the indicator function on the set $D$.
We assume that the radiating scattered field  $u \in H^{1}_{loc}(\R^d)$ generated by these small sources satisfies   
\begin{equation}\label{small-scat}
\Delta u + k^2 u = - f \mathbbm{1}_{D} \quad  \text{in} \quad \R^d 
\end{equation}
along with the Sommerfeld radiation condition as in~\eqref{radiation}.  
It is well--known that the solution is given by 
\begin{equation}\label{us-num}
u (x) =  \int_{D} f(y) \Phi(x,y)\, \text{d}y
\end{equation} 
where $\Phi (x,y)$ is the Green function as in~\eqref{green}.

We consider inverse problem  of determining $x_j$, $j = 1, \dots, N$, from  $u$  and $ \partial u/ \partial \nu$ on  $\partial \Omega$.  We have that for any $x \in \R^d \setminus \overline{D}$ we can use a first order Taylor expansion of the Green function to approximate 
the radiated field $u$. Therefore, we  notice that $y \in D_j$ if and only if $y = x_j + \epsilon \omega$ for some $\omega \in B_j$ which gives that 
\begin{align*}
u (x) &=  \int_{D} f(y) \Phi(x,y)\, \text{d}y 
	   = \sum_{j=1}^{N} \int_{D_j} f (y)  \Phi (x, x_j + \epsilon \omega) \, \text{d}y \\
	   &= \sum_{j=1}^{N} \Big (\Phi (x, x_j) + \mathcal{O}(\epsilon)   \Big ) \int_{D_j} f (y) \, \text{d}y. 
\end{align*}
Then, letting $\overline{f}_{D_j}$ be the average value of $f$ on $D_j$ and using the fact that $|D_j| = \epsilon^d |B_j|$  we have that 
\begin{equation}
\label{asym-solu}
u (x) =  \epsilon^d \sum_{j=1}^{N} \overline{f}_{D_j} |B_j|  \Phi (x, x_j)  + \mathcal{O}(\epsilon^{d+1}). 
\end{equation} 
This gives that up to leading order $u$ acts like a radiating scattered field generated by point sources located at $x_j$ with intensity $\alpha_j = \epsilon^d  |B_j| \overline{f}_{D_j} $. Notice, that by the Cauchy-Schwartz inequality we have that is bounded with respect to $\epsilon$. As in~\eqref{identity}, we use~\eqref{asym-solu} and the Green formula for $\Im \Phi(x,z)$ to derive the following identity.

\begin{lemma}
The following identity holds for the radiating scattered field $u$ in~\eqref{usc}
\begin{align}
\label{identity}
 \int_{\partial \Omega} \left(\frac{\partial \Im \Phi(x,z)}{\partial \nu(x)} u(x) - \Im \Phi(x,z) \frac{\partial u(x)}{\partial \nu(x)} \right) \dd s(x)= 
 \epsilon^d \sum_{j=1}^N \overline{f}_{D_j}  |B_j| \Im \Phi(x_j , z)+ \mathcal{O}(\epsilon^{d+1}).
\end{align}
\end{lemma}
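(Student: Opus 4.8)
The plan is to reduce to the point-source situation already handled in the first Lemma: using the asymptotic expansion \eqref{asym-solu}, the small-volume sources behave, to leading order, like point sources of intensity $\alpha_j=\eps^d|B_j|\,\overline{f}_{D_j}$ located at $x_j$, and the claimed identity should then follow from \eqref{identity} (the point-source version) applied to the leading term plus an estimate of the remainder.

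First I would record the expansion slightly more precisely than in \eqref{asym-solu}. Since $x_j\in\Omega$ and $\text{dist}(\partial\Omega,X)>0$, for $\eps$ small enough $\oD$ is disjoint from a fixed neighborhood of $\partial\Omega$, and on that neighborhood $y\mapsto\Phi(x,y)$ is smooth for $y\in\oD$. A second-order Taylor expansion about $y=x_j$ then gives
\[
u(x)=\eps^d\sum_{j=1}^N \overline{f}_{D_j}|B_j|\,\Phi(x,x_j)+r_\eps(x),
\]
where $r_\eps$ and $\nabla r_\eps$ are $\mathcal{O}(\eps^{d+1})$ uniformly for $x$ in that neighborhood of $\partial\Omega$; the uniform bound holds because the second-order $y$-derivatives of $\Phi(x,y)$ are bounded for $x$ near $\partial\Omega$ and $y\in\oD$, because $|D_j|=\eps^d|B_j|$, and because $|\overline{f}_{D_j}|\le|D_j|^{-1/2}\|f\|_{L^2(D_j)}$ by Cauchy--Schwarz keeps the intensities bounded. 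Differentiating $u(x)=\int_D f(y)\Phi(x,y)\,\dd y$ under the integral sign justifies the same statement for $\partial u/\partial\nu$ on $\partial\Omega$.

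Next I would substitute this into the left-hand side of \eqref{identity} and use linearity of the boundary integral to split it into a leading part and a remainder part. For the leading part, each term $\int_{\partial\Omega}\big(\tfrac{\partial\Im\Phi(x,z)}{\partial\nu(x)}\Phi(x,x_j)-\Im\Phi(x,z)\tfrac{\partial\Phi(x,x_j)}{\partial\nu(x)}\big)\,\dd s(x)$ equals $\Im\Phi(x_j,z)$ exactly, by the Green representation formula applied exactly as in the proof of the first Lemma (using $\Delta\Im\Phi(\cdot,z)+k^2\Im\Phi(\cdot,z)=0$ in $\R^d$, that $\Phi(\cdot,x_j)$ is radiating, and that its only singularity $x_j$ lies inside $\Omega$). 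Summing over $j$ with the weights $\eps^d\overline{f}_{D_j}|B_j|$ produces the stated main term. For the remainder part I would bound
\[
\left|\int_{\partial\Omega}\left(\frac{\partial\Im\Phi(x,z)}{\partial\nu(x)}\,r_\eps(x)-\Im\Phi(x,z)\,\frac{\partial r_\eps(x)}{\partial\nu(x)}\right)\dd s(x)\right|\le C(z)\,|\partial\Omega|\left(\|r_\eps\|_{L^\infty(\partial\Omega)}+\|\nabla r_\eps\|_{L^\infty(\partial\Omega)}\right)=\mathcal{O}(\eps^{d+1}),
\]
where $C(z)=\sup_{x\in\partial\Omega}\big(|\Im\Phi(x,z)|+|\nabla_x\Im\Phi(x,z)|\big)$ is finite because $\Im\Phi(\cdot,z)$ is entire, cf.\ \eqref{imag-bessel}, and uniformly bounded for $z$ in any bounded sampling region. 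Combining the two parts gives \eqref{identity}.

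The only step requiring genuine care, rather than being a routine reuse of the point-source argument, is the uniform control of the Taylor remainder \emph{and} of its normal derivative on $\partial\Omega$: one must check that differentiating under the integral sign is legitimate and that the resulting remainders are $\mathcal{O}(\eps^{d+1})$ uniformly, which relies on $\oD$ staying at positive distance from $\partial\Omega$ for small $\eps$ so that all the relevant derivatives of $\Phi(x,y)$ are bounded. Everything else follows directly from the Green representation formula already used above.
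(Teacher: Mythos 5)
Your argument is exactly the route the paper takes: the paper offers no written proof of this lemma, only the remark that it follows from the expansion \eqref{asym-solu} combined with the Green representation formula as in the first lemma, which is precisely your decomposition into a leading point-source part (handled termwise by the representation formula) plus an $\mathcal{O}(\epsilon^{d+1})$ remainder. You in fact supply the details the paper leaves implicit --- the uniform control of the Taylor remainder and of its normal derivative on $\partial\Omega$, valid because $\overline{D}$ stays at positive distance from $\partial\Omega$, and the Cauchy--Schwarz bound on the effective intensities --- so the proposal is correct and matches the paper's approach.
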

As in the case of point sources this lemma allows us to use the imaging functional $I(z)$ in~\eqref{functional} to determine $x_j$.
In fact, the centers $x_j$ of the small sources can be determined as the locations where  $|I(z)|^p$ has significant peaks.
And also, up to the leading order, if $ \overline{f}_{D_j}  \neq 0$ we have that for all $z \in \R^d \setminus \overline{D}$, 
$| I(z) |^p = \mathcal{O} \Big ( \text{dist}(z,{ X})^{\frac{(1-d)p}{2}} \Big )$  as $ \text{dist}(z,{X}) \rightarrow \infty$
where the set ${ X} = \{x_j : 1, \hdots, N \}$.

\section{Numerical examples}

We present some  numerical examples in two dimensions to demonstrate the performance of the method.  
In all of the numerical examples the wave number is chosen as $k = 20$ and the Cauchy data is given at 256 points 
which are  uniformly distributed on
the circle centered at the origin with a radius of 50 (which is about 159 wavelengths). The data is computed using equations \eqref{usc} and \eqref{us-num}, respectively with 10\% random noise added to the data. The noise vectors  $\mathcal{N}_{1,2}$  consist of numbers $a + \text{i}b$ where $a, b \in (-1,1)$ are randomly generated with a uniform distribution. 
For simplicity we consider the same noise level for both $u$ and $\partial u/ \partial \nu$.  The noise vectors are added to  $u$ and $\partial u/ \partial \nu$ as follows
\begin{align*}
 u    + 10\%\frac{\mathcal{N}_1}{\|\mathcal{N}_1\|_F} \|u  \|_F, \quad 
  \frac{\partial u }{\partial \nu}  
   + 10\%\frac{\mathcal{N}_2}{\|\mathcal{N}_2\|_F} \left\|\frac{\partial u }{\partial \nu} \right \|_F,
\end{align*}
where $\|\cdot\|_F$ is the Frobenius matrix norm. 
The imaging functional $I(z)$ is computed at $256\times 256$ uniformly distributed sampling points on the search domain 
$[-2,2]^2$. That means the  distance between two consecutive sampling points is about 0.05 wavelength. This distance is small enough since the sources are supposed to be well separated. 

The  reconstruction results for  the location and intensity  of different numbers of point sources can be seen in Tables~\ref{tab1} and \ref{tab2} as well as  Figure~\ref{fi1}. The number of significant peaks of $|I(z)|^4$ (i.e. $p=4$) can be seen clearly in Figure~\ref{fi1}.  Then we determine $x_j$ as the location of these peaks. Tables~\ref{tab1} and \ref{tab2}  show that the locations
$x_j$ of the sources $\delta_{x_j}$ ($j = 1,\dots, N$) are computed with good accuracy and the intensities $\alpha_j$ are recovered with reasonable errors (the relative error ranges 
from 1\% to 10\%). In particular, the method can reasonably compute the location and intensity of two  point sources located within a wavelength.  It is noticed that 
the error gets worse as the number of sources to compute increases. 
We display in  Table~\ref{tab3} and Figure~\ref{fi2} the  reconstruction results of sources that are  small disks. The centers $x_j$ of these circular sources are again  computed 
with very good accuracy.

\begin{center}
\begin{table}[h]\centering
\begin{small}
\caption{Reconstruction result for the location of $N$ point sources}
\label{tab1}
\begin{tabular}{|c|c|c|c|cl}
\hline
$N$ & True location $x_j$ & Computed  location $x_j$   \\ \hline
2 (close) & $(0.15,0), \, (-0.15,0)$  & $(0.125,0.000), \,(-0.125,0.000)$  \\ \hline
2 (distant) & $(-1,0.8), \, (0.7,-1)$  & $(-1.000,0.796), \,(0.703,-1.000)$  \\ \hline
3 & $(1,-1),\, (1.3,1),\, (-1.2,-0.25)$ &  $(1.000,-0.984), \,(-1.312,0.984), \,(-1.203,-0.250)$   \\ \hline
4 & $(1,-1), (1,0.75),  (-1.2,-1), (-1.2,0.75)$ & $(1.015,-1.000),  (1.015,0.750),  (-1.203,-1.000),  (-1.203,0.750)$  \\
 \hline
\end{tabular}
\end{small}
\end{table}
\end{center}

\vspace{-1.2cm}

\begin{center}
\begin{table}[h]\centering
\begin{small}
\caption{Reconstruction result for the intensity of $N$ point sources with location in Table~\ref{tab1}}
\label{tab2}
\begin{tabular}{|c|c|c|c|cl}
\hline
$N$ & True intensity $\alpha_j$  & Computed intensity $\alpha_j$ \\ \hline
2 (close) &  {$1-2\text{i}, \, 1+2\text{i}$ }& { $ 0.942 - 1.881\text{i}, \, 0.950 + 1.899\text{i} $} \\ \hline
2 (distant) &  {$1-2\text{i}, \, 1+2\text{i}$ }& { $1.007-2.045\text{i}, \, 1.024+2.004\text{i} $} \\ \hline
3 &  {$4 + 0\text{i}, \, 3.5-\text{i},\, 3.5+\text{i}$ }   & { $ 3.912 + 0.023\text{i}, \, 3.431 - 0.918\text{i}, \, 3.570 + 0.968\text{i} $} \\ \hline
4 &  { $2.5+2\text{i}, \, 2.5 -2\text{i}, \, 3.5 - \text{i}, \, 3 + \text{i}$ }& { $2.725 + 2.237\text{i}, \, 2.759 - 2.158\text{i}, \, 3.532 - 1.035\text{i}, \,  2.982 + 1.077\text{i}$} \\
 \hline
\end{tabular}
\end{small}
\end{table}
\end{center}

\vspace{-1cm}

\begin{figure}[ht!!!]
\centering
\subfloat{\includegraphics[width=6cm]{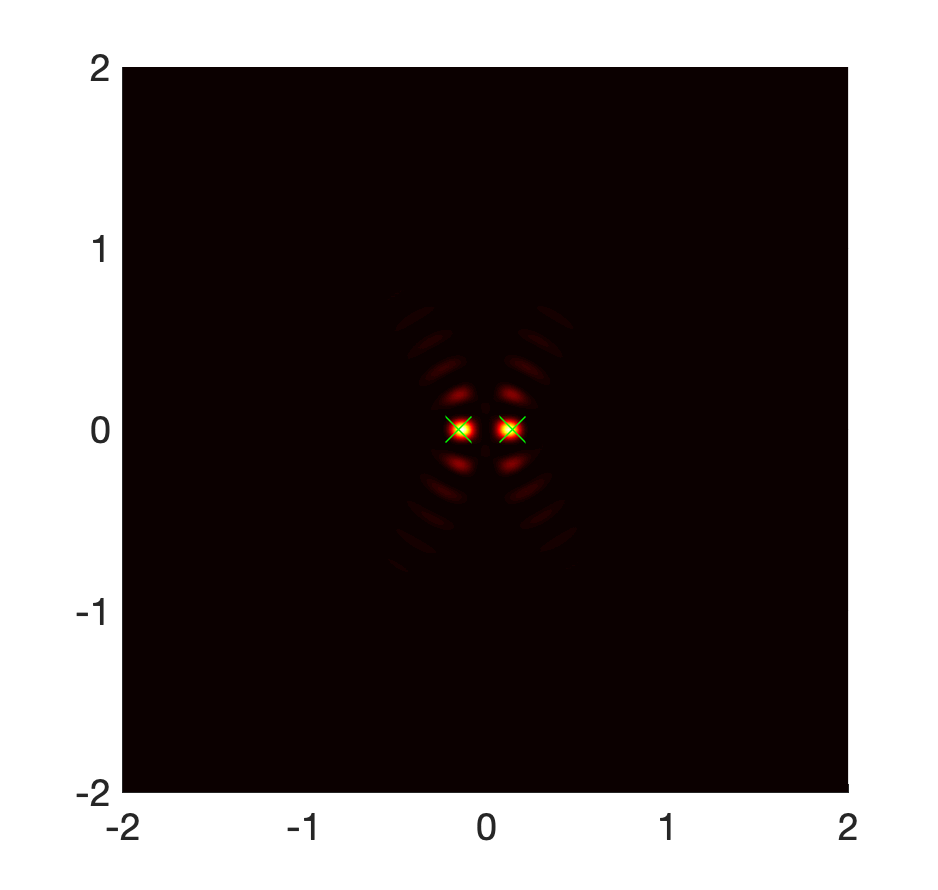}} \hspace{-0.5cm}
\subfloat{\includegraphics[width=6cm]{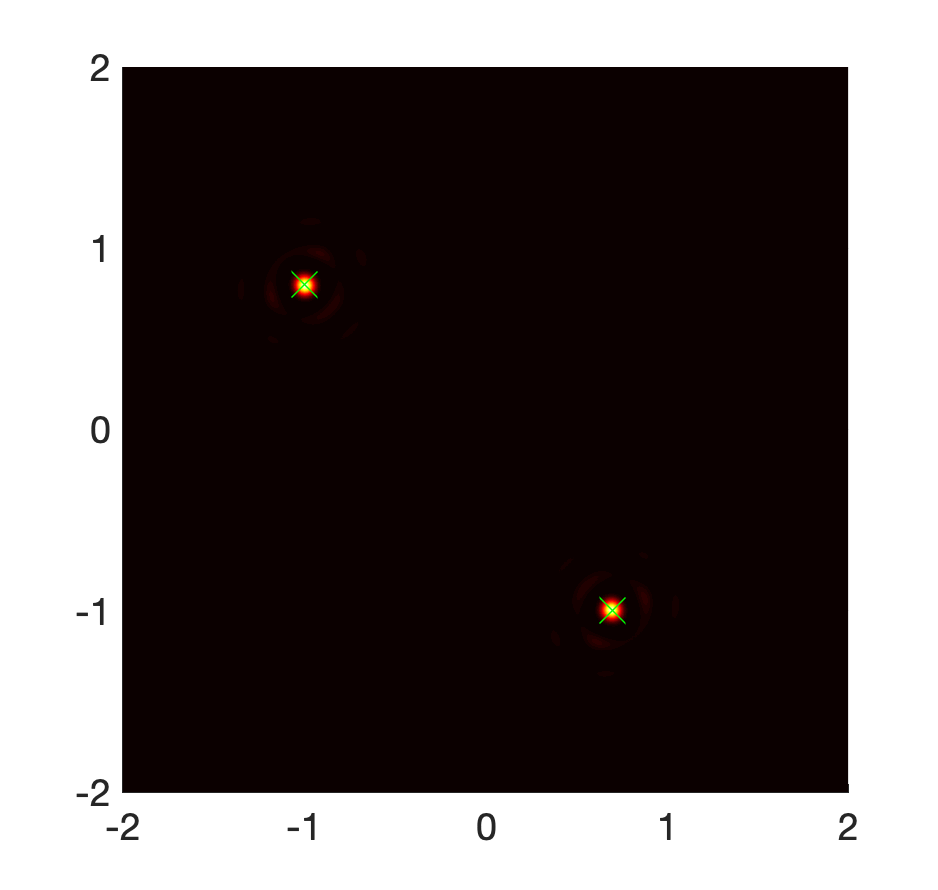}} \hspace{-0.5cm}\\
\subfloat{\includegraphics[width=6cm]{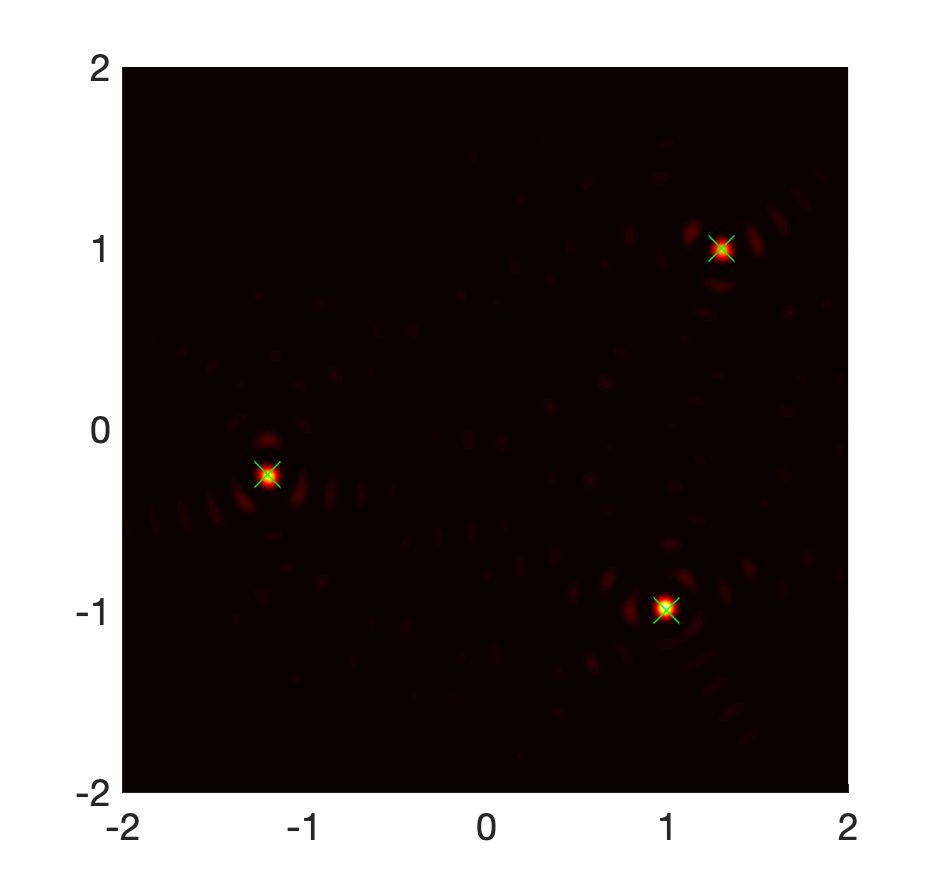}} \hspace{-0.5cm}
\subfloat{\includegraphics[width=6cm]{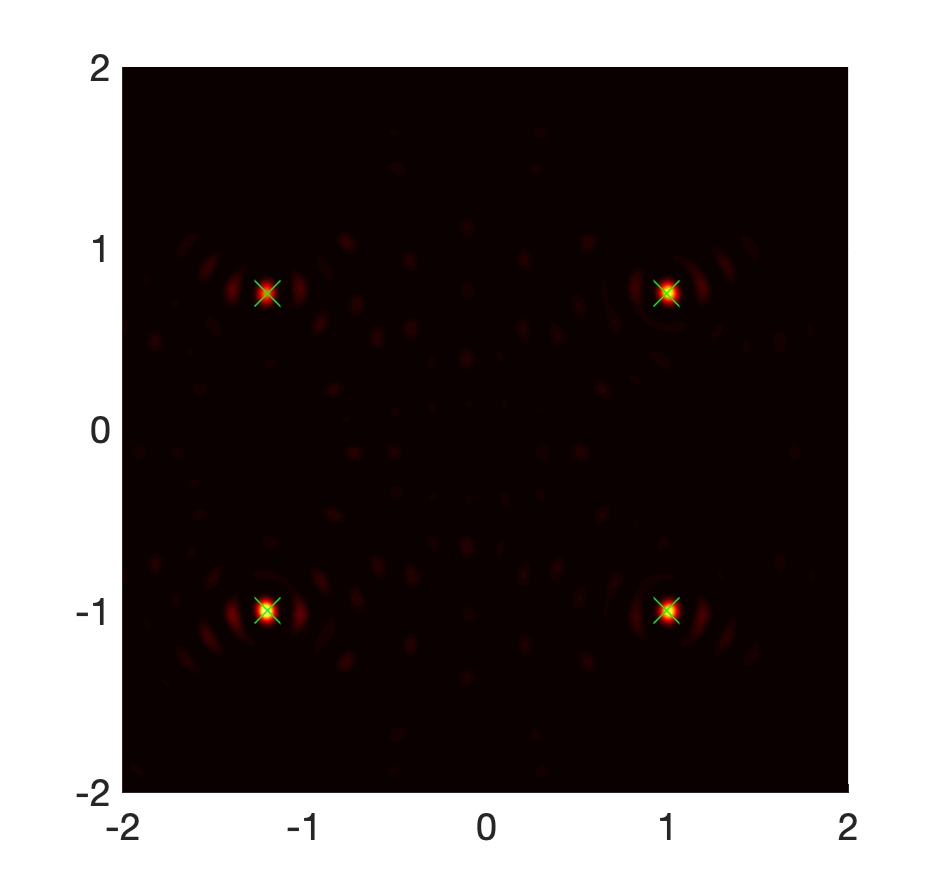}} \hspace{-0.5cm}
\caption{Pictures of $|I(z)|^4$ for point sources. The true location of the sources are displayed by the green crosses. 
 } 
 \label{fi1}
\end{figure}

\begin{table}[h]
\caption{Reconstruction result for the center $x_j$ of  $N$ small disks}
\label{tab3}\centering
\begin{footnotesize}
\begin{tabular}{|c|c|c|c|cl}
\hline
$N$ & True $x_j$ & Computed $x_j$ \\ \hline
3 & $(1,0.75), \,(-1,-1), \,(1.25,-1.5) $  & $(0.984,0.734),\, (-0.984,-1.000), \,(1.234,-1.500)$  \\ \hline
4 & $(1,1),\, (-1,-1.25), \,(1,-1), \,(-1,0.75)$ & $(0.984,0.984),\, (-0.984,-1.234), \,(0.984,-0.984), \,(-0.984,0.734)$  \\ \hline
5 & $(1.25,1.2), (-1,0), (1,-1), (-0.6,1),  (0.25,0) $ & $(1.265,1.203), (-1.000,0.015), (1.000,-1.015), (-0.609,0.984), (0.250,0.000)$  \\ \hline
\end{tabular}
\end{footnotesize}
\end{table}

\begin{center}
\begin{figure}[ht!!!]
\centering
\subfloat{\includegraphics[width=6cm]{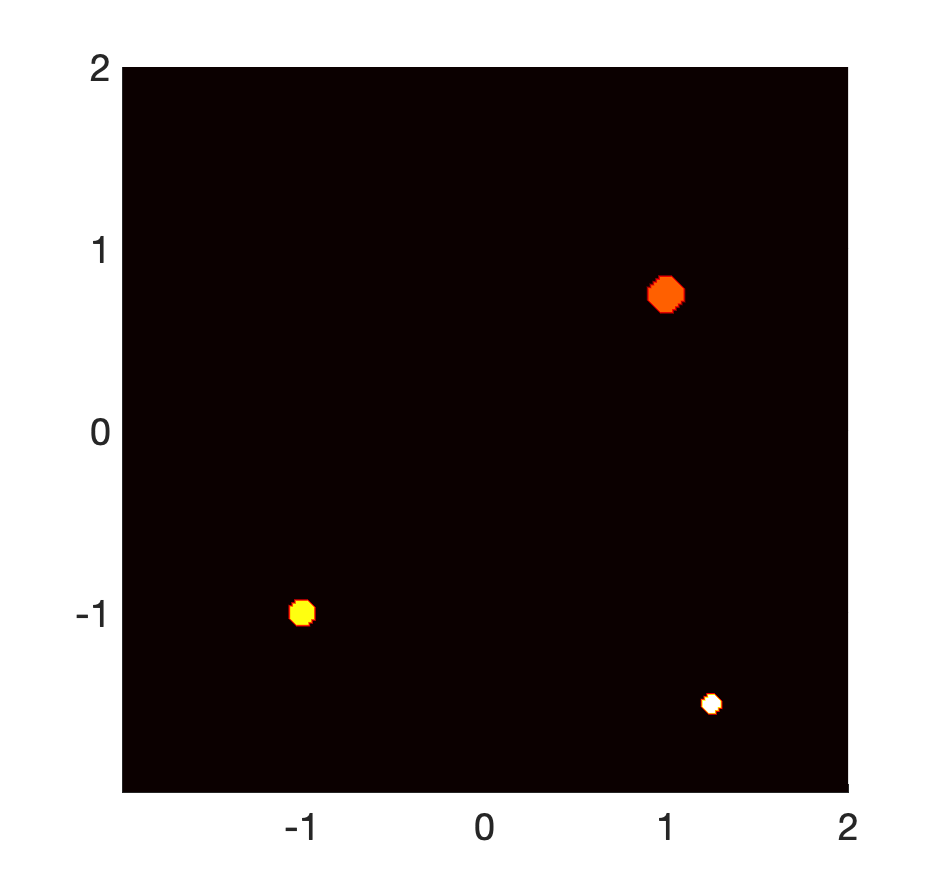}} \hspace{-0.5cm}
\subfloat{\includegraphics[width=6cm]{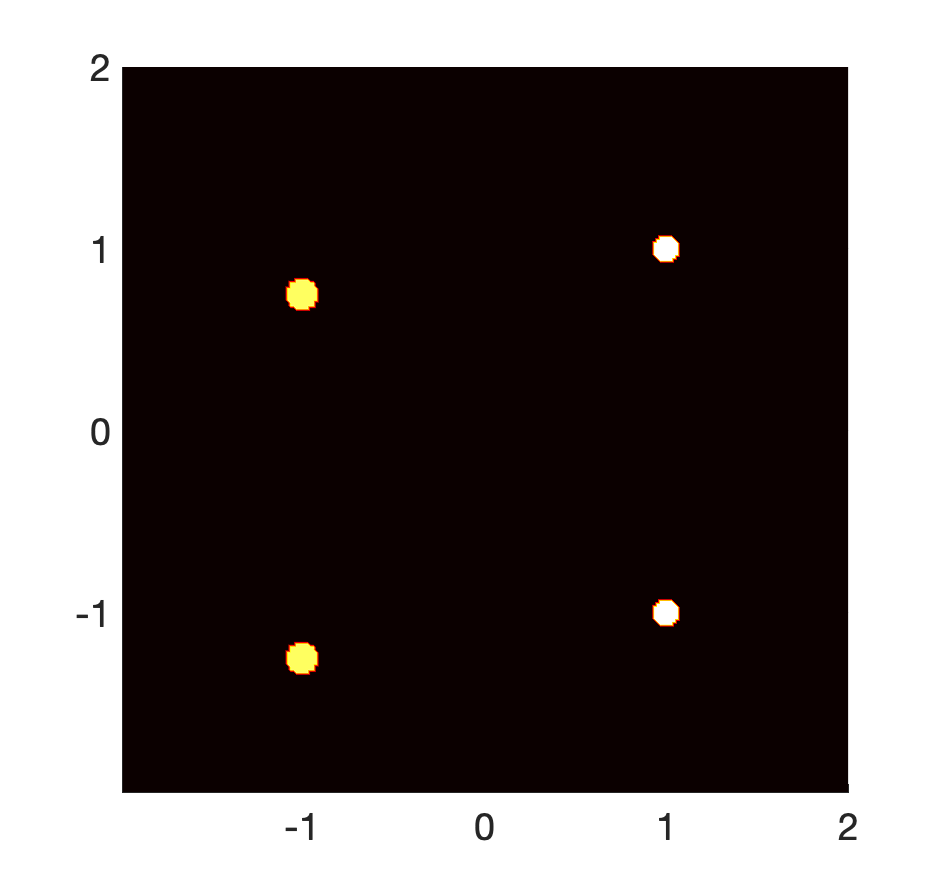}} \hspace{-0.5cm}
\subfloat{\includegraphics[width=6cm]{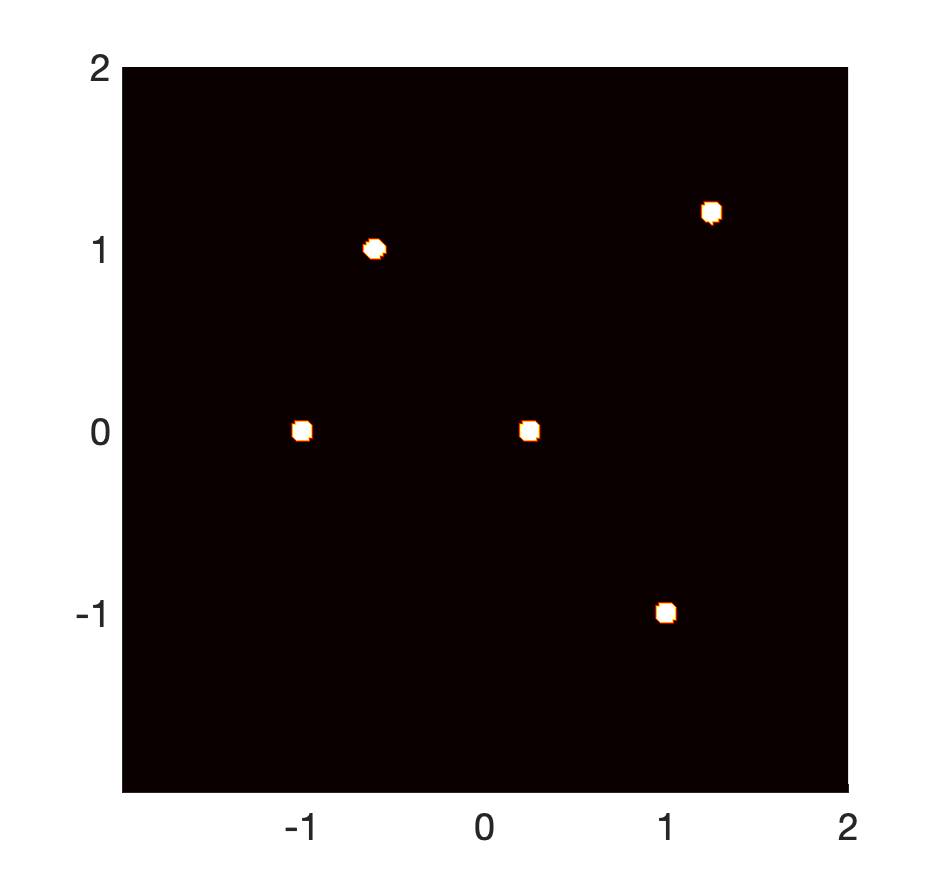}} \hspace{-0.5cm}\\
\noindent
\subfloat{\includegraphics[width=6cm]{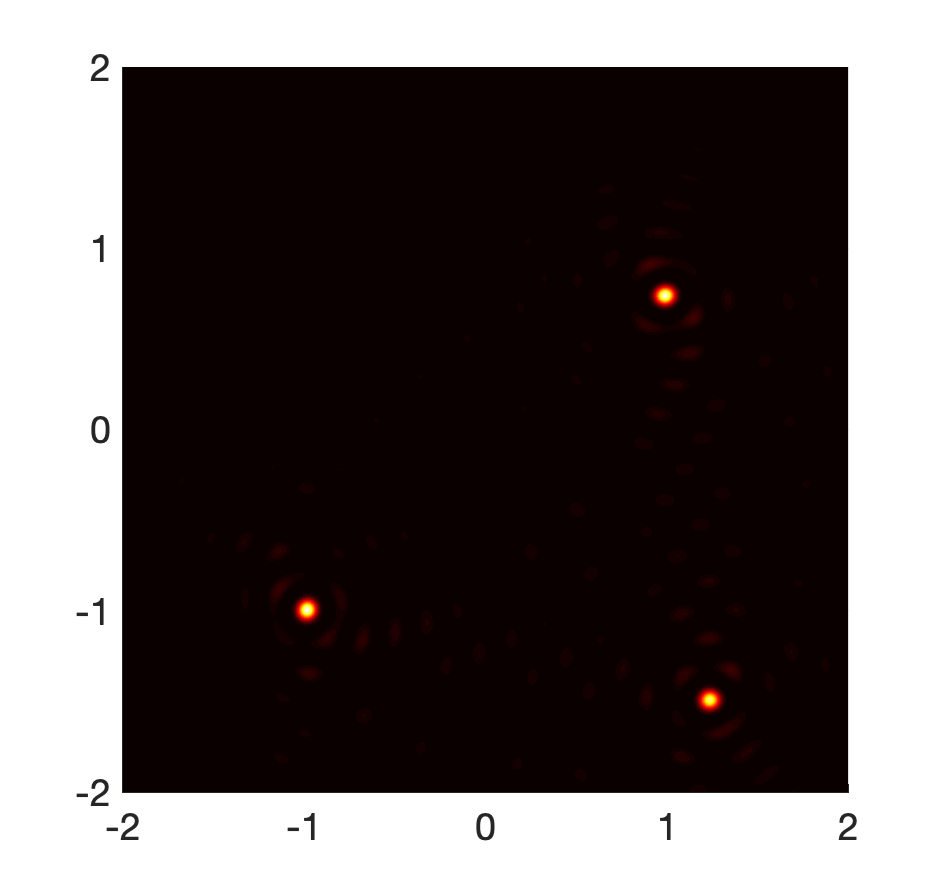}} \hspace{-0.5cm}
\subfloat{\includegraphics[width=6cm]{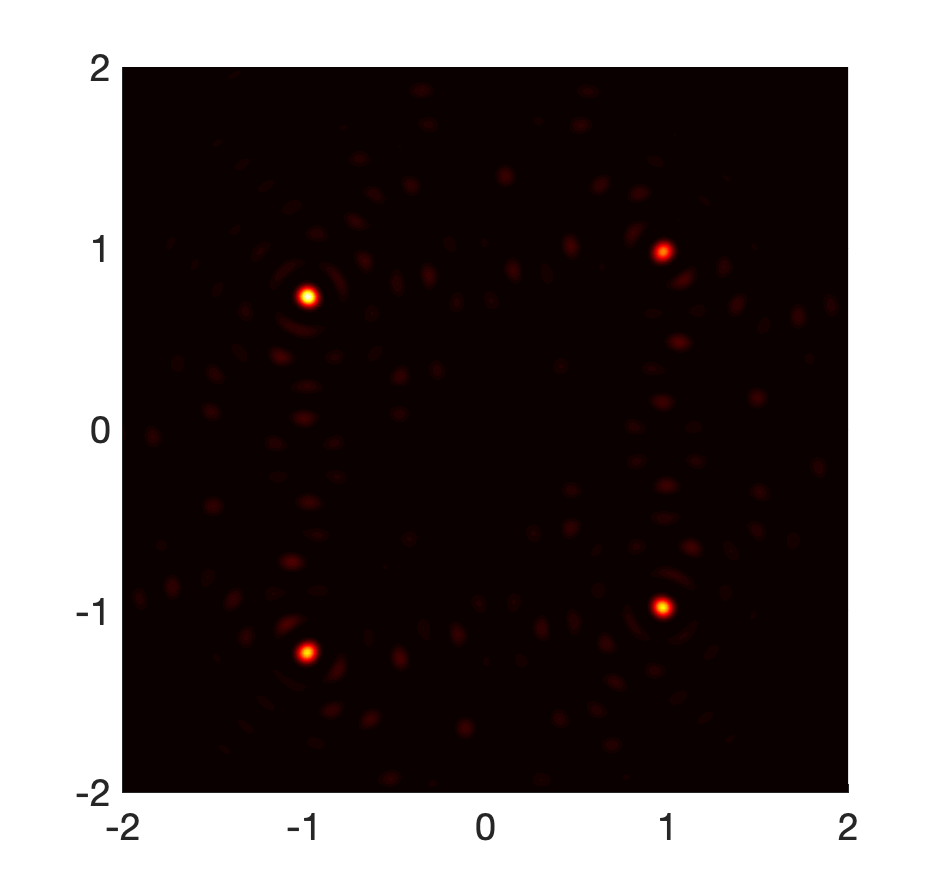}} \hspace{-0.5cm}
\subfloat{\includegraphics[width=6cm]{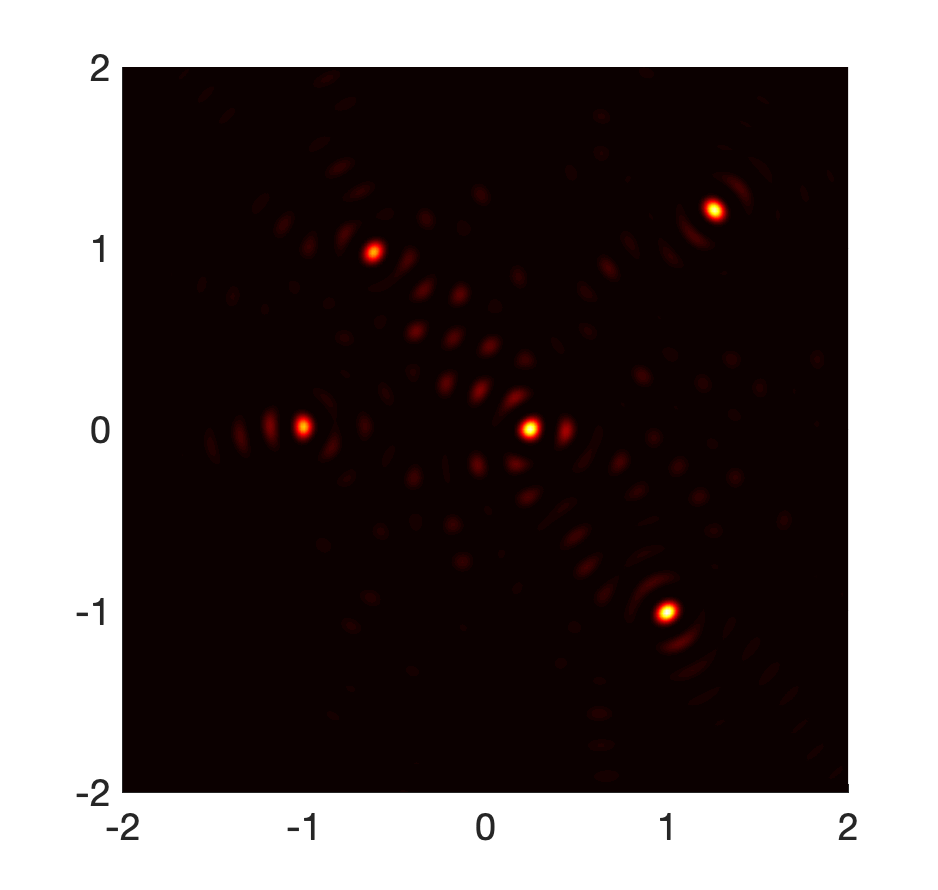}} \hspace{-0.5cm}
\caption{The first row is the true profile of small volume sources (the color of the disks is associated with the value of $f$ on each disk  that we are not able to recover using $I(z)$). The second row contains  pictures  of $|I(z)|^4$.
 } 
 \label{fi2}
\end{figure}
\end{center}
\noindent{\bf Acknowledgments:} The work of I. Harris was partially supported by the NSF  Grant DMS-2107891. 
The work of T. Le and D.-L. Nguyen was partially supported by NSF Grant DMS-2208293.

\bibliographystyle{plain}
\bibliography{ip-biblio2}

\end{document}